\documentclass{amsart}%
\usepackage{amsfonts}
\usepackage{amsmath}
\usepackage{amssymb}
\usepackage{graphicx}%
\setcounter{MaxMatrixCols}{30}
\providecommand{\U}[1]{\protect\rule{.1in}{.1in}}
\newtheorem{theorem}{Theorem}
\theoremstyle{plain}
\newtheorem{acknowledgement}{Acknowledgement}

\newtheorem{corollary}{Corollary}

\newtheorem{example}{Example}

\newtheorem{proposition}{Proposition}
\newtheorem{remark}{Remark}

\numberwithin{equation}{section}
\begin{document}
\title[Cost Convexity]{Structural results on convexity relative to cost functions}
\author{Flavia-Corina Mitroi}
\address[Flavia-Corina Mitroi]{University of Craiova, Department of Mathematics, Street
A. I. Cuza 13, Craiova, RO-200585, Romania}
\email{fcmitroi@yahoo.com}
\author{Daniel Alexandru Ion}
\address[Daniel Alexandru Ion]{University of Craiova, Department of Mathematics, Street
A. I. Cuza 13, Craiova, RO-200585, Romania }
\email{dan\_alexion@yahoo.com}
\subjclass[2010]{ 26A51}
\keywords{cost function, cost subdifferential, cost convex function, Jensen inequality,
Fenchel transform}

\begin{abstract}
Mass transportation problems appear in various areas of mathematics, their
solutions involving cost convex potentials. Fenchel duality also represents an
important concept for a wide variety of optimization problems, both from the
theoretical and the computational viewpoints. We drew a parallel to the
classical theory of convex functions by investigating the cost convexity and
its connections with the usual convexity.\ We give a generalization of
Jensen's inequality for $c$-convex functions.

\end{abstract}
\maketitle

\section{Introduction}

Let $I$ and $J$ be two bounded intervals. Assume $f\ $is a real valued
function defined on $I$ such that there exists $g$ a real valued function
defined on $J\ $which satisfies
\begin{equation}
f(x)=\sup_{y\in J}\left\{  xy-g\left(  y\right)  \right\}  . \label{sup}%
\end{equation}
The function $f$ is called the Fenchel transform (conjugate) of $g.$ It is
known that (\ref{sup}) characterizes convex functions (see \cite{F1949}).

Throughout this paper the cost function $c:I\times J\rightarrow%
\mathbb{R}
$ is continuous (unless otherwise indicated); it represents the cost per unit
mass for transporting material from $x$ $\in I$ to $y\in J$.

A proper function $f:I\rightarrow\left(  -\infty,\infty\right]  $ is said to
be $c$-convex (see for instance \cite{caf1996},\cite{R2007},\cite{vil09}) if
there exists $g:J\rightarrow\left(  -\infty,\infty\right]  $ such that for all
$x\in I$ we have%
\[
f\left(  x\right)  =\sup_{y\in J}\left\{  c\left(  x,y\right)  -g\left(
y\right)  \right\}  .
\]
It adapts the notion of a convex function to the geometry of the cost
function. Its $c$-transform ($c$-conjugate) is $f^{c}$ defined by
\[
f^{c}\left(  y\right)  =\sup_{x\in I}\left\{  c\left(  x,y\right)  -f\left(
x\right)  \right\}  .
\]
If for a fixed $x_{0}$ the supremum is obtained at $y_{0},$ then we say that
$c\left(  x,y_{0}\right)  -g\left(  y_{0}\right)  $ supports $f$ (is tangent
by below \cite{caf1996}) at $x_{0}.$ One has the double $c$-conjugate
\[
f^{cc}\left(  x\right)  =\sup_{y\in J}\inf_{z\in I}\left\{  f\left(  z\right)
+c\left(  x,y\right)  -c\left(  z,y\right)  \right\}
\]
for all $x\in I.$ This is the largest $c$-convex function majorized by $f,$
that is $f^{cc}\leq f$ (see \cite[pp. 125]{RR1988}). We also recall that the
condition $f=f^{cc}$ is equivalent to the $c$-convexity of\ $f$ (see
\cite[Proposition 5.8]{vil09})$.$

Replacing the supremum by the infimum one gets the definition of cost concavity.

Before stating the results we establish the notation and recall some
definitions from the literature (see \cite{RR1988}).

Given a function $f:I\rightarrow%
\mathbb{R}
$, we say that $f$ admits a $c-$support curve at $x_{0}\in I$ if there exists
$y\in J$ such that%
\[
f(x)\geq f(x_{0})+c\left(  x,y\right)  -c\left(  x_{0},y\right)  ,\text{ for
all }x\in I.
\]

The $c-$subdifferential\emph{\ }($c-$normal mapping \cite{TW09})\emph{ }of a
real function\emph{\ }$f$ defined on an interval $I$ is a multivalued function
$\partial_{c}f:I\rightarrow\mathcal{P}(J)$ given by%
\[
\partial_{c}f(x_{0})=\left\{  y\in J:f(x)\geq f(x_{0})+c\left(  x,y\right)
-c\left(  x_{0},y\right)  \text{, for every}\,\,x\in I\right\}  .
\]
The elements of $\partial_{c}f(x)$ are called $c$-subgradients at $x$.

We denote throughout the paper the effective domain of the $c-$subdifferential
by
\[
dom\left(  \partial_{c}f\right)  =\left\{  x_{0}\in I:\partial_{c}f(x_{0}%
)\neq\varnothing\right\}  .
\]
Every $c$-convex function admits a $c-$support curve at\ each interior point
of its domain, that is $f$ satisfies $dom\left(  \partial_{c}f\right)
\supseteq int(I).$ Clearly, if the cost function is differentiable in its
first variable at $x_{0},$ then we also have
\[
\frac{\partial c}{\partial x}\left(  x_{0},\partial_{c}f(x_{0})\right)
\subseteq\partial f(x_{0}).
\]
The map $x\rightarrow c\left(  x,y\right)  -f(x)$ is maximized at $x_{0}%
,$\ and so we have $y\in\partial_{c}f(x_{0})$ if and only if $f^{c}\left(
y\right)  =c\left(  x_{0},y\right)  -f(x_{0}).$ It follows that a $c-$convex
function $f$ can be represented as
\begin{equation}
f\left(  x\right)  =\sup_{y\in\partial_{c}f(x)}\left\{  c\left(  x,y\right)
-g\left(  y\right)  \right\}  \label{c_conv}%
\end{equation}
for every$\,\,x\in dom\left(  \partial_{c}f\right)  .$ Obviously then
\[
f^{cc}\left(  x\right)  =\sup_{y\in\partial_{c}f(x)}\inf_{z\in I}\left\{
f\left(  z\right)  +c\left(  x,y\right)  -c\left(  z,y\right)  \right\}
\]
for all $x\in I.$

Similar concepts were developed for $c$-concave functions in \cite{TW2009}.
Some authors (see for instance \cite{MTW2005},\cite[Section 6]{TW09}) consider
by definition that a function $f$ is $c-$concave if $dom\left(  \partial
_{c}f\right)  =I,$ that is if it admits $c-$support curve at any point of its
domain. For this they assume the function $f$ to be upper semicontinuous.

For the particular case $c\left(  x,y\right)  =xy$ we get from (\ref{c_conv})
the usual convexity of $f$. Obviously then we recover the definitions of the
usual subdifferential $\partial f$ and of the support lines for convex
functions. For the usual convex functions we will use the well-known notation
$f^{c}=f^{\ast}$ and $f^{cc}=f^{\ast\ast}.$

The aim of this paper is to investigate the cost convexity and to establish
some connections with the usual convexity. See also \cite{MN2011} for more
results on this topic. Before stating the results, since much of our attention
here will be devoted to Jensen's inequality (see \cite{cpn06}), we recall for
the reader's convenience its classical statement, both the discrete and
integral forms:

J1) \emph{Let }$x_{i}\in I,$\emph{ }$p_{i}>0,$\emph{ }$i=1,...,n,$\emph{
}$\sum p_{i}=1.$\emph{ Then}%
\[
f\left(  \sum p_{i}x_{i}\right)  \leq\sum p_{i}f\left(  x_{i}\right)
\]
\emph{holds for every convex function }$f:I\rightarrow%
\mathbb{R}
.$

J2) \emph{Let }$h:[a,b]\rightarrow I$\emph{ be an integrable function. Then
\ }%
\[
f\left(  \frac{1}{b-a}\int_{a}^{b}h\left(  x\right)  \mathrm{d}x\right)
\leq\frac{1}{b-a}\int_{a}^{b}f\left(  h\left(  x\right)  \right)  \mathrm{d}x
\]
\emph{holds for every convex function }$f:I\rightarrow%
\mathbb{R}
$\emph{, provided }$f\circ h$\emph{ is integrable}$.$

\section{Main results}

\subsection{Jensen's inequality for $c-$convex functions}

In what follows $i$-affine (convex, concave) stands for "affine (convex,
concave) in the $i$-th variable". We firstly state and prove the discrete and
continuous forms of Jensen's inequality for $c-$convex functions.

\begin{theorem}
[the discrete form of Jensen's inequality]\label{JC_1} Let $c:I\times
J\rightarrow%
\mathbb{R}
$ be a cost function. {Assume }$f:I\rightarrow\mathbb{%
\mathbb{R}
}$ is a{\ }$c-${convex func}tion. Let $n\geq2,$ $x_{i}\in I,$ $p_{i}>0,$
$i=1,...,n,$ $\sum p_{i}=1$. Let $y\in\partial_{c}f\left(  \sum p_{i}%
x_{i}\right)  .$ Then%
\[
\sum p_{i}f\left(  x_{i}\right)  -f\left(  \sum p_{i}x_{i}\right)  \geq\sum
p_{i}c\left(  x_{i},y\right)  -c\left(  \sum p_{i}x_{i},y\right)  .
\]

\end{theorem}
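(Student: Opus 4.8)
The plan is to derive the inequality directly from the definition of the $c$-subdifferential, without any additional machinery. Set $x_0:=\sum_{i=1}^{n}p_ix_i$. Since $I$ is an interval and the weights $p_i$ are positive with $\sum p_i=1$, the point $x_0$ lies in $I$, so $\partial_cf(x_0)$ is well defined and, by hypothesis, $y\in\partial_cf(x_0)$.

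By the definition of the $c$-subdifferential, the membership $y\in\partial_cf(x_0)$ means precisely that $f$ admits the $c$-support curve $x\mapsto f(x_0)+c(x,y)-c(x_0,y)$ at $x_0$, that is,
\[
f(x)\ge f(x_0)+c(x,y)-c(x_0,y)\qquad\text{for every }x\in I.
\]
First I would specialize this inequality to $x=x_i$ for each $i=1,\dots,n$, obtaining $f(x_i)-c(x_i,y)\ge f(x_0)-c(x_0,y)$. Then I would multiply the $i$-th inequality by $p_i>0$ (which preserves its direction) and sum over $i$; since $\sum p_i=1$, the right-hand side collapses to $f(x_0)-c(x_0,y)$, yielding $\sum p_if(x_i)-\sum p_ic(x_i,y)\ge f(x_0)-c(x_0,y)$. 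Rearranging this is exactly the asserted bound.

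I do not expect a genuine obstacle here: the statement is essentially $c$-subgradient testing against a convex combination of points, and it uses neither differentiability of $c$ nor any regularity of $f$ beyond the existence of a $c$-support curve at $x_0$ — which is precisely what $y\in\partial_cf(x_0)$ encodes, and which for a $c$-convex $f$ is in any case guaranteed at interior points of its domain through the representation (\ref{c_conv}). The only care needed is the routine bookkeeping: checking that $x_0\in I$ so that the hypothesis on $y$ is meaningful, and tracking the sign when multiplying by the $p_i$ and summing.
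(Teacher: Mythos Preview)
Your argument is correct and follows essentially the same route as the paper: use the $c$-support inequality at $x_0=\sum p_ix_i$ guaranteed by $y\in\partial_cf(x_0)$, specialize to each $x_i$, multiply by $p_i$, and sum. The paper's proof is exactly this, without the extra remark that $x_0\in I$.
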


\begin{proof}
We consider the $c-$support curve at $\sum p_{i}x_{i}\ $corresponding to the
$c-$gradient $y.$ It holds%
\[
f(x)\geq f\left(  \sum p_{i}x_{i}\right)  +c\left(  x,y\right)  -c\left(  \sum
p_{i}x_{i},y\right)  ,
\]
for all $x\in I.$ Particularly we can write
\[
f(x_{i})\geq f\left(  \sum p_{i}x_{i}\right)  +c\left(  x_{i},y\right)
-c\left(  \sum p_{i}x_{i},y\right)  ,
\]
for $i=1,...,n$. By multiplying both sides by $p_{i}\ $and summing over $i$ we
get the claimed result.
\end{proof}

\begin{corollary}
Let $c:I\times J\rightarrow%
\mathbb{R}
$ be a cost function and{ }$f:\left[  a,b\right]  \rightarrow\mathbb{%
\mathbb{R}
}$ be $c-${convex}. Then{%
\begin{equation}
\frac{f\left(  a\right)  +f\left(  b\right)  }{2}-f\left(  \frac{a+b}%
{2}\right)  \geq\frac{c\left(  a,y\right)  +c\left(  b,y\right)  }{2}-c\left(
\frac{a+b}{2},y\right)  \label{discrete_j}%
\end{equation}
}for all $y\in\partial_{c}f\left(  \frac{a+b}{2}\right)  $.
\end{corollary}

\begin{proof}
We apply Theorem \ref{JC_1}, taking $x_{1}=a,$ $x_{2}=b,$ $p_{1}=p_{2}%
=\frac{1}{2}.$ Then $\frac{a+b}{2}\in\left(  a,b\right)  \subseteq dom\left(
\partial_{c}f\right)  .$
\end{proof}

For $c\left(  x,y\right)  =xy$ we recapture the inequality
\[
f\left(  \frac{a+b}{2}\right)  \leq\frac{f\left(  a\right)  +f\left(
b\right)  }{2}.
\]
Another straightforward consequence of Theorem \ref{JC_1} reads as follows.

\begin{corollary}
Let $c:I\times J\rightarrow%
\mathbb{R}
$ be a cost function and{ }$f:\left[  a,b\right]  \rightarrow\mathbb{%
\mathbb{R}
}$ be $c-${convex}. Let $y\in\partial_{c}f\left(  \frac{a+b}{2}\right)  $ and
$g:[a,b]\rightarrow\mathbb{%
\mathbb{R}
},$ $g\left(  x\right)  =c\left(  x,y\right)  -f\left(  x\right)  .$\ Then
\[
g\left(  \frac{a+b}{2}\right)  \geq\frac{g\left(  a\right)  +g\left(
b\right)  }{2}.
\]

\end{corollary}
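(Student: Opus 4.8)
The plan is to recognize this corollary as essentially a rewriting of the previous corollary (the midpoint inequality \eqref{discrete_j}) after regrouping terms. First I would start from inequality \eqref{discrete_j}, which is available since $f$ is $c$-convex on $[a,b]$ and $y\in\partial_c f\!\left(\frac{a+b}{2}\right)$; explicitly,
\[
\frac{f(a)+f(b)}{2}-f\!\left(\frac{a+b}{2}\right)\geq\frac{c(a,y)+c(b,y)}{2}-c\!\left(\frac{a+b}{2},y\right).
\]
Then I would move everything to one side and collect the values of $c(\cdot,y)-f(\cdot)$ at the three points $a$, $b$, $\frac{a+b}{2}$, observing that the left-hand side minus the right-hand side equals $c\!\left(\frac{a+b}{2},y\right)-f\!\left(\frac{a+b}{2}\right)-\tfrac12\bigl(c(a,y)-f(a)\bigr)-\tfrac12\bigl(c(b,y)-f(b)\bigr)$.

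The key step is simply to substitute the definition $g(x)=c(x,y)-f(x)$: the regrouped inequality becomes $g\!\left(\frac{a+b}{2}\right)-\frac{g(a)+g(b)}{2}\geq 0$, which is exactly the claimed conclusion. I would present this as a one-line computation: rearrange \eqref{discrete_j}, then read off the statement in terms of $g$.

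There is no real obstacle here; the only thing to be slightly careful about is the sign bookkeeping — \eqref{discrete_j} is phrased with $f$-terms on the left and $c$-terms on the right, so $g=c-f$ flips the orientation and turns the "$\geq$" for the $c$-convex inequality into the stated midpoint-concavity-type inequality for $g$. One should also note, as in the proof of the previous corollary, that $\frac{a+b}{2}\in(a,b)\subseteq dom(\partial_c f)$ so that the hypothesis $y\in\partial_c f\!\left(\frac{a+b}{2}\right)$ is not vacuous; but this is already guaranteed by the hypotheses as stated. Hence the proof is just: apply the preceding corollary and substitute.
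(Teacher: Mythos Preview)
Your proposal is correct and matches the paper's own proof, which simply says ``Directly from \eqref{discrete_j}.'' You have spelled out the one-line rearrangement that the paper leaves implicit, but the approach is identical.
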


\begin{proof}
Directly from (\ref{discrete_j}).
\end{proof}

Under $c-$convexity conditions, the integral Jensen's inequality is given by
the following theorem.

\begin{theorem}
[the integral form of Jensen's inequality]\label{JI}Let $c:I\times
J\rightarrow%
\mathbb{R}
$ be a cost function and{ }$f:\left[  a,b\right]  \rightarrow\mathbb{%
\mathbb{R}
}$ be continuous and $c-${convex}. Then {\
\begin{equation}
f\left(  \frac{a+b}{2}\right)  \left(  b-a\right)  +\int_{a}^{b}\left[
c\left(  x,y\right)  -c\left(  \frac{a+b}{2},y\right)  \right]  \mathrm{d}%
x\leq\int_{a}^{b}f\left(  x\right)  \mathrm{d}x \label{J_1}%
\end{equation}
}for all $y\in\partial_{c}f\left(  \frac{a+b}{2}\right)  $.
\end{theorem}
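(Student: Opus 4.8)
The plan is to mimic the proof of the discrete form (Theorem~\ref{JC_1}), replacing the finite convex combination $\sum p_i x_i$ by the Lebesgue average of the identity map over $[a,b]$, which equals $\frac{a+b}{2}=\frac{1}{b-a}\int_a^b x\,\mathrm{d}x$. First, since $f$ is $c$-convex and $\frac{a+b}{2}\in int\,[a,b]\subseteq dom\left(\partial_c f\right)$, the set $\partial_c f\left(\frac{a+b}{2}\right)$ is nonempty, so fixing an arbitrary $y$ in it is legitimate. For such a $y$ the $c$-support curve at $\frac{a+b}{2}$ yields the pointwise inequality
\[
f(x)\geq f\left(\frac{a+b}{2}\right)+c\left(x,y\right)-c\left(\frac{a+b}{2},y\right)\qquad\text{for all }x\in[a,b].
\]

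The second step is simply to integrate this inequality over $[a,b]$ and use monotonicity of the integral. Integrability causes no trouble: $f$ is continuous by hypothesis and $x\mapsto c(x,y)$ is continuous because the cost function $c$ is continuous, so both sides are integrable on the compact interval $[a,b]$. Integrating, the left-hand side gives $\int_a^b f(x)\,\mathrm{d}x$ and the right-hand side gives $f\left(\frac{a+b}{2}\right)(b-a)+\int_a^b\left[c(x,y)-c\left(\frac{a+b}{2},y\right)\right]\mathrm{d}x$, which is precisely \eqref{J_1}. Since $y\in\partial_c f\left(\frac{a+b}{2}\right)$ was arbitrary, the inequality holds for every such $y$.

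There is essentially no serious obstacle: the argument is the continuous analogue of ``multiply by $p_i$ and sum'' from Theorem~\ref{JC_1}, with ``sum'' replaced by ``integrate''. The only points deserving a word of justification are the existence of the $c$-support curve at the midpoint (guaranteed by $c$-convexity together with the midpoint being an interior point of the domain) and the integrability of the two sides (guaranteed by continuity of $f$ and of $c$). If one instead wanted the fully general weighted form analogous to J2), with $h\colon[a,b]\to I$ in place of the identity, the extra care would lie in verifying that $x\mapsto c(h(x),y)$ is integrable; but for the midpoint-weighted statement \eqref{J_1} no such complication arises.
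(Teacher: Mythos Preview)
Your proof is correct and follows essentially the same approach as the paper's own argument: fix $y\in\partial_c f\!\left(\frac{a+b}{2}\right)$, write down the $c$-support inequality at the midpoint, and integrate over $[a,b]$. Your version is in fact slightly more careful in that you justify the nonemptiness of $\partial_c f\!\left(\frac{a+b}{2}\right)$ and the integrability of both sides, points the paper leaves implicit.
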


\begin{proof}
Let $y\in\partial_{c}f\left(  \frac{a+b}{2}\right)  .$\ We consider the
$c-$support curve at $\frac{a+b}{2}\ $corresponding to the $c-$gradient $y.$
It holds%
\[
f(x)\geq f\left(  \frac{a+b}{2}\right)  +c\left(  x,y\right)  -c\left(
\frac{a+b}{2},y\right)
\]
for all $x\in I.$ To complete the proof, it remains to integrate the
inequality on $\left[  a,b\right]  .$
\end{proof}

One can use the same recipe in order to obtain the weighted form of integral
Jensen's inequality, replacing the Lebesgue measure by a Borel probabilistic
measure $\mu$ on $\left[  a,b\right]  $ with the barycenter $b_{\mu}\in\left(
a,b\right)  .$ Thus {%
\[
f\left(  b_{\mu}\right)  +\int_{a}^{b}\left[  c\left(  x,y\right)  -c\left(
b_{\mu},y\right)  \right]  \mathrm{d}\mu\left(  x\right)  \leq\int_{a}%
^{b}f\left(  x\right)  \mathrm{d}\mu\left(  x\right)  ,
\]
}for all $y\in\partial_{c}f\left(  b_{\mu}\right)  $.

\begin{remark}
Obviously (\ref{J_1}) can be written in a more general form using another
point $\xi\in dom\left(  \partial_{c}f\right)  $ instead of $\frac{a+b}{2}.$
Then {%
\begin{equation}
f\left(  \xi\right)  \left(  b-a\right)  +\int_{a}^{b}\left[  c\left(
x,y\right)  -c\left(  \xi,y\right)  \right]  \mathrm{d}x\leq\int_{a}%
^{b}f\left(  x\right)  \mathrm{d}x, \label{cpn}%
\end{equation}
where }$y\in\partial_{c}f\left(  \xi\right)  .$
\end{remark}

From (\ref{cpn}), for the particular case $c\left(  x,y\right)  =xy$ we
recapture a result due to C.P. Niculescu and L.E. Persson \cite[p. 668]{cpn04}:

\begin{corollary}
{Let }$f:\left[  a,b\right]  \rightarrow\mathbb{%
\mathbb{R}
}$ be a{ continuous, convex func}tion, $\xi\in\left(  a,b\right)  $. It holds{%
\[
f\left(  \xi\right)  +y\left(  \frac{a+b}{2}-\xi\right)  \leq\frac{1}{b-a}%
\int_{a}^{b}f\left(  x\right)  \mathrm{d}x,
\]
where }$y\in\partial f\left(  \xi\right)  .$
\end{corollary}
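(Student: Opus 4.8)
The plan is to derive this corollary as a direct specialization of inequality~(\ref{cpn}) from the preceding remark, taking the cost function to be $c(x,y)=xy$. First I would recall that under this choice of cost the $c$-convex functions are exactly the usual convex functions, and the $c$-subdifferential $\partial_c f$ coincides with the ordinary subdifferential $\partial f$, as noted in the introduction; hence a continuous convex $f:[a,b]\to\mathbb{R}$ together with $\xi\in(a,b)$ and $y\in\partial f(\xi)$ satisfies all the hypotheses of the remark.

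Next I would substitute $c(x,y)=xy$ into~(\ref{cpn}). The integrand becomes $c(x,y)-c(\xi,y)=xy-\xi y=y(x-\xi)$, so the integral term is
\[
\int_a^b y(x-\xi)\,\mathrm{d}x = y\left(\frac{b^2-a^2}{2}-\xi(b-a)\right) = y(b-a)\left(\frac{a+b}{2}-\xi\right).
\]
Thus~(\ref{cpn}) reads
\[
f(\xi)(b-a) + y(b-a)\left(\frac{a+b}{2}-\xi\right) \leq \int_a^b f(x)\,\mathrm{d}x.
\]
Dividing through by $b-a>0$ yields precisely the claimed inequality.

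There is essentially no obstacle here: the only things to check are the elementary evaluation of $\int_a^b(x-\xi)\,\mathrm{d}x$ and the observation that $\xi\in(a,b)\subseteq dom(\partial_c f)$ for a continuous convex function, so that $\partial f(\xi)$ is nonempty and~(\ref{cpn}) genuinely applies. I would present the argument in two short lines, citing the remark for~(\ref{cpn}) and performing the substitution explicitly, and close by noting that this recovers the Niculescu--Persson estimate from \cite[p.~668]{cpn04}.
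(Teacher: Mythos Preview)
Your proposal is correct and follows exactly the approach indicated in the paper: the corollary is obtained from inequality~(\ref{cpn}) by specializing to $c(x,y)=xy$, computing $\int_a^b (xy-\xi y)\,\mathrm{d}x = y(b-a)\big(\tfrac{a+b}{2}-\xi\big)$, and dividing by $b-a$. The paper presents this as an immediate consequence without further detail, so your two-line derivation is precisely what is intended.
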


\begin{corollary}
\label{J_cor1}{All continuous }functions $f:\left[  a,b\right]  \rightarrow
\mathbb{%
\mathbb{R}
},$ which are{ }convex relative to 1-affine costs, satisfy {%
\[
f\left(  \frac{a+b}{2}\right)  \leq\frac{1}{b-a}\int_{a}^{b}f\left(  x\right)
\mathrm{d}x.
\]
}
\end{corollary}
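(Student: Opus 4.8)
The plan is to derive this as an immediate specialization of the generalized integral Jensen inequality in equation~(\ref{cpn}), choosing the reference point $\xi = \frac{a+b}{2}$. The key observation is that when the cost function $c$ is $1$-affine, the integral correction term $\int_a^b \left[ c(x,y) - c\left( \frac{a+b}{2}, y \right) \right] \mathrm{d}x$ vanishes, so that~(\ref{cpn}) collapses to exactly the stated inequality after dividing through by $b-a$.

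Concretely, I would first recall that $c$ being convex relative to $1$-affine costs means that for the $c$-convex function $f$ in question, the supporting cost $x \mapsto c(x,y)$ used in the $c$-support curve is affine in $x$, say $c(x,y) = \alpha(y) x + \beta(y)$ for suitable functions $\alpha, \beta$ of $y$ alone. Then, taking any $y \in \partial_c f\left( \frac{a+b}{2} \right)$ (which is nonempty since $\frac{a+b}{2} \in (a,b) \subseteq \mathrm{int}([a,b]) \subseteq \mathrm{dom}(\partial_c f)$ by the remark that every $c$-convex function admits a $c$-support curve at each interior point of its domain), I would compute
\[
\int_a^b \left[ c(x,y) - c\left( \tfrac{a+b}{2}, y \right) \right] \mathrm{d}x = \alpha(y) \int_a^b \left( x - \tfrac{a+b}{2} \right) \mathrm{d}x = 0,
\]
since $\frac{a+b}{2}$ is precisely the midpoint of $[a,b]$ and hence the mean of the linear function $x$ over that interval.

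Substituting this into~(\ref{J_1}) (equivalently~(\ref{cpn}) with $\xi = \frac{a+b}{2}$) yields $f\left( \frac{a+b}{2} \right)(b-a) \leq \int_a^b f(x)\,\mathrm{d}x$, and dividing by $b-a > 0$ gives the claim. I would also note that the continuity hypothesis on $f$ is what guarantees the integrability of $f$ on $[a,b]$ and the applicability of Theorem~\ref{JI}.

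There is no real obstacle here: the only point requiring a moment's care is confirming that "$1$-affine cost" is exactly the structural condition making the correction integral vanish at the midpoint — the argument genuinely uses that the correction term is linear in $x$, not merely affine, because it is the difference $c(x,y) - c\left( \frac{a+b}{2}, y \right)$ that is integrated, and any constant part $\beta(y)$ cancels automatically while the linear part integrates to zero by symmetry about the midpoint. One might additionally remark that if a non-midpoint $\xi$ were used, the conclusion would instead read $f(\xi) + \alpha(y)\left( \frac{a+b}{2} - \xi \right) \leq \frac{1}{b-a}\int_a^b f(x)\,\mathrm{d}x$, recovering the Niculescu--Persson-type corollary above; the midpoint choice is special precisely because it kills the extra term.
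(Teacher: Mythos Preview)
Your proposal is correct and follows essentially the same approach as the paper: both use (\ref{J_1}) and observe that the $1$-affine structure of $c$ forces $\int_a^b c(x,y)\,\mathrm{d}x = c\left(\tfrac{a+b}{2},y\right)(b-a)$, so the correction term vanishes. Your write-up is more detailed (explicit form $c(x,y)=\alpha(y)x+\beta(y)$, nonemptiness of $\partial_c f$ at the midpoint, the remark on non-midpoint $\xi$), but the underlying argument is identical.
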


\begin{proof}
Since the function $c\left(  x,y\right)  $ is\ 1-affine,
\[
c\left(  \frac{a+b}{2},y\right)  \left(  b-a\right)  =\int_{a}^{b}c\left(
x,y\right)  \mathrm{d}x.
\]
We use (\ref{J_1}). This completes the proof.
\end{proof}

The 1-affine cost functions can be expressed as $c\left(  x,y\right)
=a\left(  y\right)  x+b\left(  y\right)  $ with $a,b:J\rightarrow%
\mathbb{R}
.$ The cost function $c\left(  x,y\right)  =xy$ is obviously 1-affine and
Corollary \ref{J_cor1} applies, hence the known Jensen's inequality for convex
functions becomes a particular case of Theorem \ref{JI}. In the light of
Jensen's inequality it appears that the convexity relative to 1-affine cost
functions implies the usual convexity.

\subsection{The $c-$convexity and the role of the $c-$subdifferential}

We establish next some new connections between the usual convexity and the
cost convexity. Due to its dependence on the cost function, the concept of
cost subdifferential is providing conceptual clarity and plays a crucial role
in what follows.

Every continuous $c-$convex function is the upper envelope of its $c$-support
curves. More precisely:

\begin{proposition}
\label{env}Let $c:I\times J\rightarrow%
\mathbb{R}
$ be uniformly continous and{ }$f:I\rightarrow\mathbb{%
\mathbb{R}
}$ be continuous and $c-${convex}. Assume $y$ is a selection of $\partial
_{c}f$, that is $y\left(  t\right)  \in\partial_{c}f(t)$ for all $t\in
dom\left(  \partial_{c}f\right)  .$ Then%
\[
f\left(  x\right)  =\sup_{t\in int(I)}\left\{  f(t)+c\left(  x,y\left(
t\right)  \right)  -c\left(  t,y\left(  t\right)  \right)  \right\}
\]
for all $x\in I.$
\end{proposition}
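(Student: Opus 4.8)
The plan is to prove separately the two inequalities whose conjunction is the claimed identity; write, for brevity, $F(x) = \sup_{t \in int(I)}\{ f(t) + c(x, y(t)) - c(t, y(t))\}$.

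First I would verify $f(x) \ge F(x)$ for every $x \in I$. Since $f$ is $c$-convex, $int(I) \subseteq dom(\partial_c f)$, so the selection $y(t)$ is defined for every $t \in int(I)$; and $y(t) \in \partial_c f(t)$ means, by the very definition of the $c$-subdifferential, that $f(x) \ge f(t) + c(x, y(t)) - c(t, y(t))$ for all $x \in I$. Taking the supremum over $t \in int(I)$ gives $f(x) \ge F(x)$, and in particular $F(x) < \infty$.

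Next, the reverse inequality $f(x) \le F(x)$. When $x \in int(I)$ it is immediate: choosing $t = x$ in the supremand makes it equal to $f(x)$, so $F(x) \ge f(x)$. The only case requiring work is when $x$ is an endpoint of $I$ lying in $I$. There I would fix a sequence $t_n \in int(I)$ with $t_n \to x$; continuity of $f$ gives $f(t_n) \to f(x)$, while the pairs $(x, y(t_n))$ and $(t_n, y(t_n))$ differ only in their first coordinate, by $|x - t_n| \to 0$, so uniform continuity of $c$ forces $c(x, y(t_n)) - c(t_n, y(t_n)) \to 0$ no matter how the points $y(t_n) \in J$ behave. Hence $f(t_n) + c(x, y(t_n)) - c(t_n, y(t_n)) \to f(x)$, which yields $F(x) \ge f(x)$. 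Together with the first step this gives $f = F$ on all of $I$.

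The main — and essentially the only — obstacle is the boundary case, and it is precisely there that the \emph{uniform} continuity of $c$ (rather than mere continuity) is indispensable: one has to pass to the limit in $c(x, y(t_n)) - c(t_n, y(t_n))$ with no control at all on the sequence $y(t_n)$. Everything else is a direct unpacking of the definitions of $c$-convexity and of the $c$-subdifferential.
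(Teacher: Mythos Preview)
Your proof is correct and follows essentially the same route as the paper: both treat interior points by taking $t=x$, and both handle an endpoint $x$ by letting $t\to x$ from inside $I$, using continuity of $f$ for the term $f(t)$ and uniform continuity of $c$ to force $c(x,y(t))-c(t,y(t))\to 0$ regardless of how $y(t)$ behaves. If anything, your write-up is cleaner, since you separate the two inequalities $f\ge F$ and $f\le F$ explicitly, whereas the paper's proof somewhat conflates them.
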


\begin{proof}
The case of interior points is clear. Let $x\ $be an endpoint, say the
leftmost one. By the continuity at $x$, for each $\varepsilon>0$ there exists
$\delta_{\varepsilon}>0$ such that for all $t$ with $\left\vert t-x\right\vert
<\delta_{\varepsilon}$ we have $\left\vert f(t)-f(x)\right\vert <\frac
{\varepsilon}{2}$ and $\left\vert c\left(  t,y\left(  t\right)  \right)
-c\left(  x,y\left(  t\right)  \right)  \right\vert <\frac{\varepsilon}{2}.$
This shows that
\[
f(x)+\varepsilon>f(t)+c\left(  x,y\left(  t\right)  \right)  -c\left(
t,y\left(  t\right)  \right)
\]
for $t\in\left(  x,x+\delta_{\varepsilon}\right)  .$ We also have
\[
\lim_{t\rightarrow x+}\left[  c\left(  x,y\left(  t\right)  \right)  -c\left(
t,y\left(  t\right)  \right)  \right]  =0
\]
and the result follows.
\end{proof}

In the context of usual convexity, Proposition \ref{env} has the following
known corollary:

\begin{corollary}
[{\cite[Theorem 1.5.2]{cpn06}}]Let $f:I\rightarrow%
\mathbb{R}
$ be continuous and convex. Assume $y$ is a selection of $\partial f$, that is
$y\left(  t\right)  \in\partial f(t)$ for all $t\in I.$ Then%
\[
f\left(  x\right)  =\sup_{t\in int(I)}\left\{  f(t)+\left(  x-t\right)
y\left(  t\right)  \right\}
\]
for all $x\in I.$
\end{corollary}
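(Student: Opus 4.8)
The plan is to obtain this corollary directly from Proposition \ref{env} by specializing to the cost function $c(x,y) = xy$, which we already know (from the discussion around (\ref{c_conv})) reduces $c$-convexity to ordinary convexity and $\partial_c f$ to the usual subdifferential $\partial f$. First I would check that the hypotheses of Proposition \ref{env} are met: the cost $c(x,y)=xy$ is indeed uniformly continuous on $I\times J$ when $I$ and $J$ are bounded intervals (a bilinear form on a bounded rectangle is Lipschitz, hence uniformly continuous), so the hypothesis is not vacuous. Since $f:I\to\mathbb{R}$ is continuous and convex, it is $c$-convex for this $c$, and a selection $y(t)\in\partial f(t)$ is precisely a selection of $\partial_c f$.

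Next I would substitute into the conclusion of Proposition \ref{env}. With $c(x,y(t)) - c(t,y(t)) = x\,y(t) - t\,y(t) = (x-t)\,y(t)$, the displayed supremum
\[
\sup_{t\in int(I)}\left\{ f(t) + c(x,y(t)) - c(t,y(t)) \right\}
\]
becomes exactly
\[
\sup_{t\in int(I)}\left\{ f(t) + (x-t)\,y(t) \right\},
\]
which is the claimed identity. One subtlety worth a sentence: for $f$ convex on an interval the subdifferential $\partial f(t)$ is nonempty for every interior point $t$, so a selection $y$ on $int(I)$ exists; if $I$ has an endpoint in its domain where $f$ is finite, the corollary only asserts $y$ is a selection on $int(I)$ (mirroring Proposition \ref{env}, whose supremum already runs over $int(I)$), so no issue arises at the boundary.

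I do not anticipate a genuine obstacle here — the corollary is a routine translation of the proposition into the classical language. The only point requiring a moment of care is confirming uniform continuity of $c(x,y)=xy$ on the bounded rectangle $I\times J$, so that Proposition \ref{env} genuinely applies rather than merely the weaker "continuous cost" hypothesis used elsewhere in the paper; this follows because on a bounded set the partial derivatives of $xy$ are bounded, giving a Lipschitz estimate $|x_1y_1 - x_2y_2| \le M(|x_1-x_2| + |y_1-y_2|)$. With that in hand the proof is a one-line substitution, and I would write it as: "Apply Proposition \ref{env} with $c(x,y)=xy$; then $c(x,y(t))-c(t,y(t)) = (x-t)y(t)$ and $\partial_c f = \partial f$, which yields the stated formula."
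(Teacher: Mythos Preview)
Your proposal is correct and matches the paper's approach: the paper presents this corollary immediately after Proposition \ref{env} with the remark that it is the specialization of that proposition to the usual convexity setting (i.e., $c(x,y)=xy$), and gives no further proof. Your additional care in verifying the uniform continuity of $c(x,y)=xy$ on the bounded rectangle $I\times J$ is a welcome detail that the paper leaves implicit.
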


The following proposition lets us see the way the $c-$subdifferential and the
subdifferential are connected.

\begin{proposition}
[relating $c-$subdifferentials to subdifferentials]\label{c_sd}Let $c:I\times
J\rightarrow%
\mathbb{R}
$ be a cost function and{ }$f:I\rightarrow\mathbb{%
\mathbb{R}
}$. It holds
\[
\left(  x,y\right)  \in\partial_{c}f\Rightarrow\partial c_{y}(x)\subseteq
\partial f\left(  x\right)  ,
\]
where $c_{y}(x)=c(x,y).$ Moreover if $f$ is differentiable and $c$ is
differentiable in its first variable, then $\frac{\partial c}{\partial
x}\left(  x,y\right)  =f^{\prime}\left(  x\right)  .$
\end{proposition}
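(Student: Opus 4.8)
The plan is to unwind the definitions. Suppose $(x,y)\in\partial_c f$, i.e.\ $y\in\partial_c f(x)$. By definition this means
\[
f(z)\geq f(x)+c(z,y)-c(x,y)\qquad\text{for all }z\in I,
\]
which rearranges to $f(z)-f(x)\geq c_y(z)-c_y(x)$ for all $z\in I$. Now take any $p\in\partial c_y(x)$; by definition of the usual subdifferential of $c_y$ at $x$ we have $c_y(z)-c_y(x)\geq p\,(z-x)$ for all $z\in I$. Chaining the two inequalities yields $f(z)-f(x)\geq p\,(z-x)$ for all $z\in I$, which is exactly the statement $p\in\partial f(x)$. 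Hence $\partial c_y(x)\subseteq\partial f(x)$, proving the first assertion.

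For the second assertion, assume in addition that $f$ is differentiable at $x$ and that $c$ is differentiable in its first variable at $x$, so $c_y$ is differentiable at $x$. Differentiability of $f$ at $x$ forces $\partial f(x)=\{f'(x)\}$ (a single point), and differentiability of $c_y$ forces $\partial c_y(x)=\{c_y'(x)\}=\{\tfrac{\partial c}{\partial x}(x,y)\}$. Combining these with the inclusion $\partial c_y(x)\subseteq\partial f(x)$ just established gives $\{\tfrac{\partial c}{\partial x}(x,y)\}\subseteq\{f'(x)\}$, hence $\tfrac{\partial c}{\partial x}(x,y)=f'(x)$.

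There is essentially no obstacle here: the whole argument is a transitivity-of-inequalities computation once the definitions are written out, and the differentiable case is just the observation that a differentiable function has a singleton subdifferential equal to its derivative. The only point deserving a word of care is that for the subdifferential statements to make sense one should be working at an interior point of $I$ (or interpret one-sided derivatives appropriately at endpoints); at an interior point the classical fact that differentiability collapses the subdifferential to the derivative is exactly what closes the argument.
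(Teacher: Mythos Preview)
Your proof is correct and follows essentially the same route as the paper: chain the $c$-support inequality $f(z)-f(x)\geq c_y(z)-c_y(x)$ with the subgradient inequality $c_y(z)-c_y(x)\geq \alpha(z-x)$ to conclude $\alpha\in\partial f(x)$, then invoke singleton subdifferentials under differentiability. The paper's argument is the same, only more terse.
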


\begin{proof}
For $\left(  x,y\right)  \in\partial_{c}f,$ $\alpha\in\partial c_{y}(x)$ we
have%
\[
f(z)-f(x)\geq c\left(  z,y\right)  -c\left(  x,y\right)  \geq\alpha\left(
z-x\right)  .
\]
for all $z\in I.$\ It leads to $\alpha\in\partial f\left(  x\right)  .$ Under
the differentiability assumptions we also have $\partial c_{y}(x)=\left\{
\frac{\partial c}{\partial x}\left(  x,y\right)  \right\}  $ and $\partial
f\left(  x\right)  =\left\{  f^{\prime}\left(  x\right)  \right\}  .$

The proof is completed.
\end{proof}

The counterpart of Proposition \ref{c_sd}, for $c$-superdifferentials, can be
read in \cite[Lemma 3.1, Lemma C.7]{GM1996}, for the particular case
$c=h\left(  x-y\right)  $.

\begin{proposition}
Let $c:I\times J\rightarrow%
\mathbb{R}
$ be a cost function. For all $x\in I$ and $y\in J$ we have $\left(
x,y\right)  \in\partial_{c}c_{y}.$
\end{proposition}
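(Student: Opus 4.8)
The plan is to unwind the definition of the $c$-subdifferential and notice that, for $f=c_{y}$, the defining inequality degenerates into an equality. Fix $x\in I$ and $y\in J$. By definition, $\left(  x,y\right)  \in\partial_{c}c_{y}$ means that $y\in\partial_{c}c_{y}\left(  x\right)  $, that is,
\[
c_{y}(z)\geq c_{y}(x)+c\left(  z,y\right)  -c\left(  x,y\right)  \qquad\text{for every }z\in I.
\]
Since $c_{y}(z)=c(z,y)$ and $c_{y}(x)=c(x,y)$, the right-hand side equals $c(x,y)+c(z,y)-c(x,y)=c(z,y)$, so the displayed inequality reduces to $c(z,y)\geq c(z,y)$, which holds — with equality — for all $z\in I$. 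Hence $y\in\partial_{c}c_{y}(x)$, and as $x\in I$ and $y\in J$ were arbitrary, $\left(  x,y\right)  \in\partial_{c}c_{y}$ for all such pairs.

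There is essentially no obstacle here: the statement merely records that the $c$-support curve $z\mapsto c(z,y)$ trivially supports the function $c_{y}$ at every point $x$, because it coincides with $c_{y}$ itself. One may equivalently observe that $c_{y}$ is $c$-convex — take $g\left(  y\right)  =0$ and $g\left(  y^{\prime}\right)  =+\infty$ for $y^{\prime}\neq y$ in the defining supremum — and then, via the representation (\ref{c_conv}), $y$ is automatically a $c$-subgradient of $c_{y}$ at each $x\in I$.
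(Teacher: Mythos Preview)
Your argument is correct and is exactly the unwinding of the definition that the paper has in mind when it says the result is ``an immediate consequence of the definition of the $c$-subdifferential.'' The closing remark about the $c$-convexity of $c_{y}$ is a valid aside but is not needed for the proof.
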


\begin{proof}
The proof is an immediate consequence of the definition of the $c$ - subdifferential.
\end{proof}

\begin{proposition}
Suppose that $c:I\times J\rightarrow%
\mathbb{R}
$ is a cost function and{ }$f,g:I\rightarrow%
\mathbb{R}
\ $are{\ }$c-${convex}. It holds
\[
\partial_{c}f(x)\cap\partial_{c}g(x)\subset\partial_{c}\left(  \left(
1-\lambda\right)  f+\lambda g\right)  (x)
\]
for all $\lambda\in\left[  0,1\right]  .$
\end{proposition}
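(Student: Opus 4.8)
The plan is to unwind the definition of the $c$-subdifferential and to form the corresponding convex combination of the two $c$-support curve inequalities. Fix $x\in I$ and $\lambda\in\left[0,1\right]$, and let $y\in\partial_{c}f(x)\cap\partial_{c}g(x)$. By the definition of the $c$-subdifferential we then have, for every $z\in I$,
\[
f(z)\geq f(x)+c\left(z,y\right)-c\left(x,y\right),\qquad g(z)\geq g(x)+c\left(z,y\right)-c\left(x,y\right).
\]

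Next I would multiply the first inequality by $1-\lambda\geq0$ and the second by $\lambda\geq0$ and add them. Since both coefficients are nonnegative the inequalities are preserved under this operation, and since $\left(1-\lambda\right)+\lambda=1$ the two copies of $c\left(z,y\right)-c\left(x,y\right)$ collapse into a single copy, which yields
\[
\bigl(\left(1-\lambda\right)f+\lambda g\bigr)(z)\geq\bigl(\left(1-\lambda\right)f+\lambda g\bigr)(x)+c\left(z,y\right)-c\left(x,y\right)
\]
for all $z\in I$. This is precisely the assertion that $y\in\partial_{c}\bigl(\left(1-\lambda\right)f+\lambda g\bigr)(x)$, establishing the claimed inclusion.

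Two observations explain why nothing further is required. First, the $c$-subdifferential is defined for an arbitrary real-valued function on $I$, so there is no need to check that $\left(1-\lambda\right)f+\lambda g$ is itself $c$-convex before speaking of its $c$-subdifferential; the hypothesis that $f$ and $g$ are $c$-convex serves only to guarantee that the intersection on the left-hand side is nonempty on a large set (for instance at interior points of $I$). Second, the condition that the weights sum to $1$ is essential: it is exactly what forces the cost terms $c\left(\cdot,y\right)$ to cancel correctly, and it is the reason the statement is phrased with the pair $\left(1-\lambda,\lambda\right)$ rather than with arbitrary positive coefficients. Consequently there is no real obstacle in this proof — it is a one-line computation expressing the convexity of the feasible set of inequalities, entirely parallel to the classical fact that $\partial f(x)\cap\partial g(x)\subseteq\partial\bigl(\left(1-\lambda\right)f+\lambda g\bigr)(x)$ for ordinary subdifferentials.
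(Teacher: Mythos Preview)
Your proof is correct and follows essentially the same approach as the paper: pick $y$ in the intersection, write down the two $c$-support inequalities, and take their convex combination to obtain the $c$-support inequality for $(1-\lambda)f+\lambda g$. Your added remarks are accurate but not needed for the argument.
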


\begin{proof}
Assume $\partial_{c}f(x)\cap\partial_{c}g(x)\neq\varnothing.$ Let
$y\in\partial_{c}f(x)\cap\partial_{c}g(x)$. Then%
\begin{align*}
f(y)  &  \geq f(x)+c\left(  z,y\right)  -c\left(  x,y\right)  ,\\
g(y)  &  \geq g(x)+c\left(  z,y\right)  -c\left(  x,y\right)  ,
\end{align*}
for all $z\in I.$ Let $\lambda\in\left[  0,1\right]  .\ $We infer%
\[
\left(  \left(  1-\lambda\right)  f+\lambda g\right)  (y)\geq\left(  \left(
1-\lambda\right)  f+\lambda g\right)  (x)+c\left(  z,y\right)  -c\left(
x,y\right)  ,
\]
therefore $y\in\partial_{c}\left(  \left(  1-\lambda\right)  f+\lambda
g\right)  (x).$
\end{proof}

Our next result can be seen as a counterpart in the framework of
$c-$convexity, for \cite[Lemma 4.1]{GM1996}.

\begin{proposition}
Assume $c:I\times J\rightarrow%
\mathbb{R}
$ is a cost function and{ }$f,g:I\rightarrow%
\mathbb{R}
\ $are{\ }$c-${convex}. Let $X=\left\{  x:f\left(  x\right)  <g\left(
x\right)  \right\}  .$ If there exists $u\in X$ and $v\in I$ such that
\[
\text{ }\partial_{c}g\left(  u\right)  \text{ }\cap\partial_{c}f\left(
v\right)  \neq\varnothing
\]
then $v\in X.$
\end{proposition}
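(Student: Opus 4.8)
The plan is to exploit the two $c$-support curves that the common $c$-subgradient $y$ provides, evaluated at the \emph{crossed} points, so that the cost terms cancel upon addition. Concretely, fix $y\in\partial_{c}g(u)\cap\partial_{c}f(v)$. From $y\in\partial_{c}f(v)$ we get the $c$-support inequality for $f$ at $v$, which I would write at the particular point $z=u$:
\[
f(u)\geq f(v)+c(u,y)-c(v,y).
\]
From $y\in\partial_{c}g(u)$ we get the $c$-support inequality for $g$ at $u$, which I would write at the particular point $z=v$:
\[
g(v)\geq g(u)+c(v,y)-c(u,y).
\]

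Adding these two inequalities, the contributions $c(u,y)-c(v,y)$ and $c(v,y)-c(u,y)$ cancel, leaving
\[
f(u)+g(v)\geq f(v)+g(u),
\]
i.e. $g(v)-f(v)\geq g(u)-f(u)$. Since $u\in X$, the right-hand side is strictly positive, so $g(v)-f(v)>0$, which is exactly the statement $v\in X$.

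There is essentially no obstacle here: the whole argument rests on the single observation that one should test the $c$-support curve of $f$ (anchored at $v$) at the point $u$, and the $c$-support curve of $g$ (anchored at $u$) at the point $v$, since this is precisely the pairing that makes the cost terms annihilate. No continuity or differentiability of $c$ is needed, and $c$-convexity of $f$ and $g$ is used only to know that $\partial_{c}f$ and $\partial_{c}g$ are populated in the first place (it is not even invoked beyond the hypothesis that the intersection is nonempty). One could also phrase the same computation via the $c$-transforms, using $f(v)=c(v,y)-f^{c}(y)$, $g(u)=c(u,y)-g^{c}(y)$, together with $f^{c}(y)\geq c(u,y)-f(u)$ and $g(v)\geq c(v,y)-g^{c}(y)$, to reach $g(v)-f(v)\geq g(u)-f(u)$; but the direct addition of the two support inequalities is shorter and is the version I would present.
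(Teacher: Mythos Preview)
Your proof is correct and is essentially identical to the paper's own argument: both pick $y\in\partial_{c}g(u)\cap\partial_{c}f(v)$, write down the two $c$-support inequalities $g(v)\geq g(u)+c(v,y)-c(u,y)$ and $f(u)\geq f(v)+c(u,y)-c(v,y)$, and combine them so that the cost terms cancel to obtain $g(v)-f(v)\geq g(u)-f(u)>0$. Your remark that $c$-convexity is only used to justify the nonemptiness hypothesis is also accurate.
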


\begin{proof}
Let $y\in\partial_{c}g\left(  u\right)  $ $\cap\partial_{c}f\left(  v\right)
.$ One has
\begin{align*}
g(v)  &  \geq g(u)+c\left(  v,y\right)  -c\left(  u,y\right)  ,\\
f(u)  &  \geq f(v)+c\left(  u,y\right)  -c\left(  v,y\right)  ,
\end{align*}
which implies
\[
g(v)\geq f(v)+\left[  g(u)-f(u)\right]  >f(v).
\]
Hence $v\in X.$
\end{proof}

The remaining results of this subsection were obtained by imposing some
additional conditions to the cost function in order to get a nicer shaped
graph of the set-valued function $\partial_{c}f.$

\begin{proposition}
\label{2affine}{Let} $f:I\rightarrow\mathbb{%
\mathbb{R}
}$ be{ }convex relative to a 2-affine cost function $c$. Then, for all $x\in
I,$ the set $\partial_{c}f(x)$ is convex, possibly empty at the endpoints of
$I.$
\end{proposition}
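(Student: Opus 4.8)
The plan is to unwind the definition of the $c$-subdifferential and use the $2$-affine hypothesis to show that the defining inequality depends on $y$ in an affine way, so that the solution set is an intersection of half-lines, hence convex. Write $c(x,y)=a(y)x^2+b(y)x+d(y)$ for some functions $a,b,d:J\to\mathbb{R}$ (this is the general form of a $2$-affine cost, i.e. one that is a polynomial of degree at most $2$ in its first variable). Fix $x_0\in I$. By definition, $y\in\partial_c f(x_0)$ precisely when
\[
f(x)-f(x_0)\geq c(x,y)-c(x_0,y)=a(y)(x^2-x_0^2)+b(y)(x-x_0)\quad\text{for all }x\in I.
\]

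First I would note that the right-hand side, for each fixed $x\in I$, is an affine function of the pair $(a(y),b(y))\in\mathbb{R}^2$; equivalently, for fixed $x$ the condition $y\in\partial_c f(x_0)$ imposes on $y$ the constraint $\Phi_x(a(y),b(y))\leq f(x)-f(x_0)$, where $\Phi_x:\mathbb{R}^2\to\mathbb{R}$ is linear. Consider the map $T:J\to\mathbb{R}^2$, $T(y)=(a(y),b(y))$. Then $y\in\partial_c f(x_0)$ iff $T(y)$ lies in the set $K_{x_0}=\bigcap_{x\in I}\{(\alpha,\beta):\alpha(x^2-x_0^2)+\beta(x-x_0)\leq f(x)-f(x_0)\}$, which, being an intersection of closed half-planes, is convex. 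So $\partial_c f(x_0)=T^{-1}(K_{x_0})$.

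The main obstacle is that $T^{-1}$ of a convex set need not be convex in general, so I must use more structure. The point is that $\partial_c f(x_0)\subseteq J\subseteq\mathbb{R}$ is a subset of a line, and I want to show it is an interval. Suppose $y_1,y_2\in\partial_c f(x_0)$ with $y_1<y_2$, and let $y_\lambda=(1-\lambda)y_1+\lambda y_2$ for $\lambda\in[0,1]$; I need $y_\lambda\in\partial_c f(x_0)$. This is where I expect the $2$-affine hypothesis must be strengthened, or rather used in its precise meaning: I would invoke the representation $f=f^{cc}$ valid for $c$-convex $f$, together with the fact (from the earlier part of the excerpt, the identity relating $\partial_c f$ to the attainment of the supremum) that $y\in\partial_c f(x_0)$ iff $f^c(y)=c(x_0,y)-f(x_0)$, i.e. iff $y$ maximizes $\psi(y):=c(x_0,y)-f^c(y)$ over $J$. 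Since $f^c(y)=\sup_{x\in I}\{c(x,y)-f(x)\}$ is a supremum of functions of $y$, and for a $2$-affine cost each $c(x,\cdot)$ enters only through $a(y)$ and $b(y)$, one reduces to showing that the set of maximizers of an appropriate function on $J$ is convex; here one uses that $a(\cdot)$ and $b(\cdot)$ are the only ingredients and that $f^c$ is itself expressible through them, so that $\psi$ restricted to the line $J$ has a convex (indeed interval) argmax. Concretely: $\psi(y)=a(y)x_0^2+b(y)x_0+d(y)-f^c(y)$ and $f^c(y)=\sup_x\{a(y)x^2+b(y)x+d(y)-f(x)\}=d(y)+\sup_x\{a(y)x^2+b(y)x-f(x)\}$, so $\psi(y)=a(y)x_0^2+b(y)x_0-\sup_x\{a(y)x^2+b(y)x-f(x)\}$, which is a function of $(a(y),b(y))$ alone and is \emph{concave} in that pair (an affine term minus a supremum of affine terms). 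Thus the argmax in the $(\alpha,\beta)$-variables is convex; intersecting with the curve $\{(a(y),b(y)):y\in J\}$ and pulling back, together with the observation that a point of $J$ is determined by where it sits, the set $\partial_c f(x_0)$ is the preimage of a convex set under $T$ whose image is exactly the relevant curve — and since $\psi$ depends on $y$ only through $T(y)$, the superlevel/argmax structure forces $\partial_c f(x_0)$ to be an order-interval in $J$, hence convex.

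I would then finish by addressing the endpoints: at $x_0\in\mathrm{int}(I)$ the $c$-support curve exists (every $c$-convex function admits one at interior points, as recalled in the introduction), so $\partial_c f(x_0)\neq\varnothing$ and is a nondegenerate interval; at $x_0$ an endpoint of $I$ the same argument shows $\partial_c f(x_0)$ is convex but the argmax may fail to be attained, explaining the clause ``possibly empty at the endpoints.'' The one genuinely delicate verification is the concavity-in-$(\alpha,\beta)$ claim and the passage from convexity of the argmax in $(\alpha,\beta)$ to convexity of its preimage in $y$; I would make this rigorous by checking directly that if $y_1,y_2$ are maximizers then for $y_\lambda$ between them one has $c(x,y_\lambda)-c(x_0,y_\lambda)\le (1-\lambda)[c(x,y_1)-c(x_0,y_1)]+\lambda[c(x,y_2)-c(x_0,y_2)]$ for every $x$ — which is immediate since $c(x,y)-c(x_0,y)=a(y)(x^2-x_0^2)+b(y)(x-x_0)$ is affine in $(a(y),b(y))$ and hence the inequality is an equality — and therefore $f(x)-f(x_0)\ge(1-\lambda)[c(x,y_1)-c(x_0,y_1)]+\lambda[c(x,y_2)-c(x_0,y_2)]=c(x,y_\lambda)-c(x_0,y_\lambda)$, giving $y_\lambda\in\partial_c f(x_0)$ directly. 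This last computation is in fact the cleanest route and avoids the Fenchel detour entirely; the Fenchel reformulation is only useful as a sanity check.
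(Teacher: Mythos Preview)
You have misread the hypothesis. In this paper, ``$i$-affine'' means \emph{affine in the $i$-th variable}; thus a $2$-affine cost is one with $c(x,y)=\alpha(x)\,y+\beta(x)$ for some functions $\alpha,\beta:I\to\mathbb{R}$. You instead interpreted ``$2$-affine'' as ``polynomial of degree at most $2$ in the first variable'' and wrote $c(x,y)=a(y)x^{2}+b(y)x+d(y)$. These are entirely different hypotheses, and your argument does not survive under either one as written.

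Under your interpretation, the clean final step you propose is false: you claim that
\[
c(x,y_{\lambda})-c(x_{0},y_{\lambda})=(1-\lambda)\bigl[c(x,y_{1})-c(x_{0},y_{1})\bigr]+\lambda\bigl[c(x,y_{2})-c(x_{0},y_{2})\bigr]
\]
because $c(x,y)-c(x_{0},y)=a(y)(x^{2}-x_{0}^{2})+b(y)(x-x_{0})$ is affine in the pair $(a(y),b(y))$. But affinity in $(a(y),b(y))$ is not affinity in $y$: one would additionally need $a(y_{\lambda})=(1-\lambda)a(y_{1})+\lambda a(y_{2})$ and $b(y_{\lambda})=(1-\lambda)b(y_{1})+\lambda b(y_{2})$, which holds only when $a$ and $b$ are themselves affine in $y$. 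For the same reason your Fenchel detour breaks: $\partial_{c}f(x_{0})=T^{-1}(K_{x_{0}})$ is the preimage of a convex set under $T(y)=(a(y),b(y))$, and such a preimage need not be convex unless $T$ is affine.

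Under the correct reading, the proof is a two-line computation (and is exactly what the paper does): for $y_{1},y_{2}\in\partial_{c}f(x)$ one averages the two support inequalities and uses that $y\mapsto c(z,y)-c(x,y)$ is affine to obtain the support inequality at $(1-\lambda)y_{1}+\lambda y_{2}$. Your ``cleanest route'' paragraph is precisely this argument once ``$2$-affine'' is read correctly; the rest of your write-up (the representation $c(x,y)=a(y)x^{2}+b(y)x+d(y)$, the map $T$, the Fenchel reformulation) should be discarded.
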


\begin{proof}
Let $y_{1},$ $y_{2}\in\partial_{c}f(x).$ Then
\[
f(z)\geq f(x)+c\left(  z,y_{i}\right)  -c\left(  x,y_{i}\right)  ,\text{ for
all }z\in I,\text{ }i=1,2.
\]
By direct computation, we obtain
\begin{align*}
f(z)  &  \geq f(x)+\left(  1-\lambda\right)  \left[  c\left(  z,y_{1}\right)
-c\left(  x,y_{1}\right)  \right]  +\lambda\left[  c\left(  z,y_{2}\right)
-c\left(  x,y_{2}\right)  \right] \\
&  =f(x)+\left[  c\left(  z,\left(  1-\lambda\right)  y_{1}+\lambda
y_{2}\right)  -c\left(  x,\left(  1-\lambda\right)  y_{1}+\lambda
y_{2}\right)  \right]  ,
\end{align*}
that is $\left(  1-\lambda\right)  y_{1}+\lambda y_{2}\in\partial_{c}f(x).$
\end{proof}

\begin{remark}
This result represents a counterpart (in the framework of $c-$convexity) of
the assertion that for every convex function $f,$ the sets $\partial f(x)$ are
convex, possibly empty at the endpoints of the domain. It makes sense to us to
denote the upper and lower bounds of $\partial_{c}f(x)$ (if the set is
nonempty and convex) by $f_{-}^{\prime c}\left(  x\right)  ,\ f_{+}^{\prime
c}\left(  x\right)  $ and call them lateral $c-$derivatives$.$
\end{remark}

The set
\[
Y=\left\{  y\in J:\exists x_{1}\neq x_{2}\in I\text{ such that }y\in
\partial_{c}f(x_{1})\cap\partial_{c}f(x_{2})\right\}
\]
has the Lebesgue measure zero (see \cite[Lemma 3.1]{MTW2005}) when $f$ is
lower semicontinuous. Combining this result with Proposition \ref{2affine}, we
derive the following remark.

\begin{remark}
\label{rem_2aff} For a continuous (hence lower semicontinuous) and $c-$convex
function $f$, when dealing with 2-affine costs, the intersections
$\partial_{c}f(x_{1})\cap\partial_{c}f(x_{2}),$ $x_{1}\neq x_{2}\in I$ can
have at most one element. This agrees with the case of usual convex functions.
\end{remark}

\begin{proposition}
Suppose that the cost function $c$ is concave and 2-affine.\ Let
$f:I\rightarrow%
\mathbb{R}
$ be convex and $c-$convex. Then $\partial_{c}f$ is a convex set-valued
function, i.e. for $x_{1},x_{2}\in dom\left(  \partial_{c}f\right)  $ it
holds
\[
\left(  1-\lambda\right)  \partial_{c}f(x_{1})+\lambda\partial_{c}%
f(x_{2})\subset\partial_{c}f(\left(  1-\lambda\right)  x_{1}+\lambda x_{2})
\]
for all $\lambda\in\left[  0,1\right]  .$
\end{proposition}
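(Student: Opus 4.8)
The plan is to fix $x_1, x_2 \in \mathrm{dom}(\partial_c f)$, choose $y_i \in \partial_c f(x_i)$ for $i=1,2$, and set $y_\lambda = (1-\lambda)y_1 + \lambda y_2$ and $x_\lambda = (1-\lambda)x_1 + \lambda x_2$ for $\lambda \in [0,1]$. I must show $y_\lambda \in \partial_c f(x_\lambda)$, i.e. that $f(z) \geq f(x_\lambda) + c(z, y_\lambda) - c(x_\lambda, y_\lambda)$ for all $z \in I$. First I would write down what the two $c$-subgradient conditions give: $f(x_\lambda) \geq f(x_i) + c(x_\lambda, y_i) - c(x_i, y_i)$ for $i=1,2$, and also $f(z) \geq f(x_i) + c(z,y_i) - c(x_i,y_i)$ for all $z$. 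The idea is to combine these with a convex combination in $\lambda$, exploiting 2-affinity of $c$ to collapse the $y$-combinations into $c(\cdot, y_\lambda)$.

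The key maneuver: from $y_i \in \partial_c f(x_i)$ applied at the point $x_\lambda$, I get $f(x_i) \leq f(x_\lambda) - c(x_\lambda, y_i) + c(x_i, y_i)$. Taking the convex combination $(1-\lambda)(\cdot)_1 + \lambda(\cdot)_2$ and using that $c$ is 2-affine (so $(1-\lambda)c(a,y_1) + \lambda c(a,y_2) = c(a, y_\lambda)$ for any fixed first argument $a$) yields
\[
(1-\lambda)f(x_1) + \lambda f(x_2) \leq f(x_\lambda) - c(x_\lambda, y_\lambda) + \bigl[(1-\lambda)c(x_1,y_1) + \lambda c(x_2,y_2)\bigr].
\]
Next, from $f(z) \geq f(x_i) + c(z,y_i) - c(x_i,y_i)$, the same convex combination in $\lambda$ and 2-affinity in the $c(z,y_i)$ terms gives
\[
f(z) \geq (1-\lambda)f(x_1) + \lambda f(x_2) + c(z, y_\lambda) - \bigl[(1-\lambda)c(x_1,y_1) + \lambda c(x_2,y_2)\bigr].
\]
Adding these two inequalities, the bracketed mixed terms $(1-\lambda)c(x_1,y_1)+\lambda c(x_2,y_2)$ cancel, and I land exactly on $f(z) \geq f(x_\lambda) + c(z,y_\lambda) - c(x_\lambda, y_\lambda)$, which is the desired membership $y_\lambda \in \partial_c f(x_\lambda)$. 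Since the resulting set identity is about sums of the form $(1-\lambda)\partial_c f(x_1) + \lambda \partial_c f(x_2)$, I should remark that this argument uses 2-affinity of $c$ in an essential way but has not yet invoked either the concavity of $c$ or the ordinary convexity of $f$.

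The main obstacle is understanding why the hypotheses "$c$ concave" and "$f$ convex and $c$-convex" are needed at all, since the algebraic computation above seems to go through using only 2-affinity. The likely resolution is that those extra hypotheses are needed to guarantee that $y_\lambda$ genuinely lies in $J$ — that is, that $\partial_c f(x_1)$ and $\partial_c f(x_2)$ are convex subsets of an interval $J$ (which Proposition~\ref{2affine} already provides for 2-affine costs) and, more subtly, that the set-valued map does not have "gaps," so that $(1-\lambda)\partial_c f(x_1) + \lambda \partial_c f(x_2)$ is a meaningful object landing inside the effective domain structure; concavity of $c$ together with convexity of $f$ presumably forces monotonicity of the lateral $c$-derivatives, making $\partial_c f$ behave like the subdifferential of a genuine convex function so that the interval arithmetic is valid. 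I would therefore structure the proof as: (1) the clean algebraic identity above establishing the inclusion at the level of chosen subgradients; (2) a remark invoking Proposition~\ref{2affine} to see both sides are (sub)intervals; (3) if needed, a short monotonicity argument using concavity of $c$ and convexity of $f$ to justify that the Minkowski combination on the left is contained in the (convex) set on the right, handling the endpoint/boundary cases separately as in Proposition~\ref{env}.
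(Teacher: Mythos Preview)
Your ``key maneuver'' contains a sign error that breaks the argument. From $y_i\in\partial_c f(x_i)$ evaluated at $x_\lambda$ you correctly obtain
\[
(1-\lambda)f(x_1)+\lambda f(x_2)\;\le\;f(x_\lambda)-c(x_\lambda,y_\lambda)+M,
\qquad M:=(1-\lambda)c(x_1,y_1)+\lambda c(x_2,y_2),
\]
and from the subgradient inequalities at a generic $z$ you get
\[
f(z)\;\ge\;(1-\lambda)f(x_1)+\lambda f(x_2)+c(z,y_\lambda)-M.
\]
But these two displays cannot be ``added'' to conclude anything: the first is an \emph{upper} bound for $A:=(1-\lambda)f(x_1)+\lambda f(x_2)$, while the second needs a \emph{lower} bound for $A$ (and an upper bound for $M$) in order to reach $f(z)\ge f(x_\lambda)+c(z,y_\lambda)-c(x_\lambda,y_\lambda)$. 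Both estimates you derived lie on the same side of $A$, so the mixed term $M$ does not cancel; the chain simply does not close. This is exactly why you could not see where the extra hypotheses enter.

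The paper's proof starts from your second display and then invokes the two hypotheses directly: ordinary convexity of $f$ gives the needed lower bound $A\ge f(x_\lambda)$, and \emph{joint} concavity of $c$ gives $M=(1-\lambda)c(x_1,y_1)+\lambda c(x_2,y_2)\le c(x_\lambda,y_\lambda)$, i.e.\ the needed upper bound for $M$. Substituting these two into your second display immediately yields $f(z)\ge f(x_\lambda)+c(z,y_\lambda)-c(x_\lambda,y_\lambda)$. So the hypotheses are not about $J$ being an interval or about monotonicity of lateral $c$-derivatives; they are used pointwise, in the body of the inequality, to replace $A$ and $M$ by $f(x_\lambda)$ and $c(x_\lambda,y_\lambda)$ respectively.
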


\begin{proof}
Let $z\in\left(  1-\lambda\right)  \partial_{c}f(x_{1})+\lambda\partial
_{c}f(x_{2})$ for an arbitrary fixed $\lambda\in\left[  0,1\right]  .$ Then we
can write $z=\left(  1-\lambda\right)  a+\lambda b,$ for some $a\in
\partial_{c}f(x_{1})$, $b\in\partial_{c}f(x_{2}).$

Since%
\begin{align*}
f(x)  &  \geq f(x_{1})+c\left(  x,a\right)  -c\left(  x_{1},a\right)  ,\\
f(x)  &  \geq f(x_{2})+c\left(  x,b\right)  -c\left(  x_{2},b\right)  ,
\end{align*}
we get%
\begin{align*}
f(x)  &  \geq\left(  1-\lambda\right)  f(x_{1})+\lambda f(x_{2})+c\left(
x,\left(  1-\lambda\right)  a+\lambda b\right)  -\left(  1-\lambda\right)
c\left(  x_{1},a\right)  -\lambda c\left(  x_{2},b\right) \\
&  \geq f(\left(  1-\lambda\right)  x_{1}+\lambda x_{2})+c\left(  x,\left(
1-\lambda\right)  a+\lambda b\right) \\
&  -c\left(  \left(  1-\lambda\right)  x_{1}+\lambda x_{2},\left(
1-\lambda\right)  a+\lambda b\right)  .
\end{align*}
Therefore%
\[
f(x)\geq f\left(  \left(  1-\lambda\right)  x_{1}+\lambda x_{2}\right)
+c\left(  x,z\right)  -c\left(  \left(  1-\lambda\right)  x_{1}+\lambda
x_{2},z\right)  ,
\]
hence $z\in\partial_{c}f(\left(  1-\lambda\right)  x_{1}+\lambda x_{2}).$

This completes the proof.
\end{proof}

Our next result reads as follows.

\begin{proposition}
\label{ref_an}Let the cost function $c:I\times J\rightarrow%
\mathbb{R}
$ be 1-concave.\ Assume $f:I\rightarrow%
\mathbb{R}
$ is convex and $c-$convex. Then
\begin{equation}
\partial_{c}f(x_{1})\cap\partial_{c}f(x_{2})\subset\partial_{c}f(\left(
1-\lambda\right)  x_{1}+\lambda x_{2}) \label{s_diff_1_in}%
\end{equation}
for all $\lambda\in\left[  0,1\right]  $ and $x_{1},x_{2}\in dom\left(
\partial_{c}f\right)  .$
\end{proposition}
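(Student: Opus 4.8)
The plan is to mimic the proof of Proposition \ref{2affine} together with the convexity hypothesis on $f$, replacing the $2$-affine interpolation in the $y$-variable by the following trick available when $c$ is only $1$-concave: if $y\in\partial_c f(x_1)\cap\partial_c f(x_2)$, then the same $y$ works as a $c$-subgradient at the midpoint-type combination $x_\lambda:=(1-\lambda)x_1+\lambda x_2$. First I would fix $\lambda\in[0,1]$, set $x_\lambda=(1-\lambda)x_1+\lambda x_2$, and take an arbitrary $y$ in the intersection on the left-hand side of (\ref{s_diff_1_in}). Unravelling the definition of $\partial_c f$, I have, for every $z\in I$,
\[
f(z)\ge f(x_1)+c(z,y)-c(x_1,y),\qquad f(z)\ge f(x_2)+c(z,y)-c(x_2,y).
\]
Forming the convex combination $(1-\lambda)(\cdot)+\lambda(\cdot)$ of these two inequalities kills the $c(z,y)$ term cleanly (it has coefficient $1$) and leaves
\[
f(z)\ \ge\ \bigl[(1-\lambda)f(x_1)+\lambda f(x_2)\bigr]+c(z,y)-\bigl[(1-\lambda)c(x_1,y)+\lambda c(x_2,y)\bigr].
\]

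Now the two hypotheses on $f$ and $c$ each handle one of the two bracketed averages. Since $f$ is convex, $(1-\lambda)f(x_1)+\lambda f(x_2)\ge f(x_\lambda)$, which improves the first bracket in the right direction. Since $c$ is $1$-concave, i.e. concave in its first variable, $(1-\lambda)c(x_1,y)+\lambda c(x_2,y)\le c(x_\lambda,y)$; because this term enters with a minus sign, replacing it by the larger quantity $c(x_\lambda,y)$ again weakens the right-hand side in the correct direction. Chaining these two estimates gives, for all $z\in I$,
\[
f(z)\ \ge\ f(x_\lambda)+c(z,y)-c(x_\lambda,y),
\]
which is precisely the statement $y\in\partial_c f(x_\lambda)=\partial_c f\bigl((1-\lambda)x_1+\lambda x_2\bigr)$, establishing the inclusion (\ref{s_diff_1_in}).

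The only point requiring a little care — and the step I would flag as the potential obstacle — is making sure the two monotonicity inputs are applied with the correct signs, since $c(x_\lambda,y)$ appears negated; a sloppy sign flip would make the argument collapse. It is also worth noting that $x_\lambda\in[x_1,x_2]\subseteq \mathrm{dom}(\partial_c f)$ is automatic only for interior combinations (the endpoints $\lambda=0,1$ are trivial since then $x_\lambda\in\{x_1,x_2\}$), so no separate endpoint discussion is needed here beyond what the hypothesis $x_1,x_2\in\mathrm{dom}(\partial_c f)$ already supplies. No appeal to $c$-convexity of $f$, continuity, or $2$-affineness is actually used in this argument — only convexity of $f$ and $1$-concavity of $c$ — so the proof is short and purely algebraic once the convex combination of the two defining inequalities is written down.
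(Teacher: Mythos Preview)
Your proof is correct and follows essentially the same route as the paper's own argument: take a common $c$-subgradient, form the $(1-\lambda,\lambda)$ convex combination of the two defining inequalities, then apply convexity of $f$ and $1$-concavity of $c(\cdot,y)$ to reach the support inequality at $x_\lambda$. Your side remark that the $c$-convexity hypothesis on $f$ is not actually invoked is accurate and worth noting.
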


\begin{proof}
We focus on the case $\partial_{c}f(x_{1})\cap\partial_{c}f(x_{2}%
)\neq\varnothing.$ Let $z\in\partial_{c}f(x_{1})\cap\partial_{c}f(x_{2})$.
Then%
\begin{align*}
f(x)  &  \geq f(x_{1})+c\left(  x,z\right)  -c\left(  x_{1},z\right)  ,\\
f(x)  &  \geq f(x_{2})+c\left(  x,z\right)  -c\left(  x_{2},z\right)  ,
\end{align*}
for all $x\in I.$ Let $\lambda\in\left[  0,1\right]  .\ $Consequently%
\begin{align*}
f(x)  &  \geq\left(  1-\lambda\right)  f(x_{1})+\lambda f(x_{2})+c\left(
x,z\right)  -\left(  1-\lambda\right)  c\left(  x_{1},z\right)  -\lambda
c\left(  x_{2},z\right) \\
&  \geq f\left(  \left(  1-\lambda\right)  x_{1}+\lambda x_{2}\right)
+c\left(  x,z\right)  -c\left(  \left(  1-\lambda\right)  x_{1}+\lambda
x_{2},z\right)  ,
\end{align*}
which helps us to deduce $z\in\partial_{c}f(\left(  1-\lambda\right)
x_{1}+\lambda x_{2}).$

Thus the proof is completed.{}
\end{proof}

\begin{example}
The cost function $c\left(  x,y\right)  =-\log\left(  1-xy\right)  ,$ which
appears in the reflector antenna design problem (the far field case
\cite{KW2007}, \cite{wan2004}) is 1-convex. Since the problem deals with
$c-$concave functions, mutatis mutandis Proposition \ref{ref_an} applies.
\end{example}

If we apply Proposition \ref{ref_an} for a cost function which is 1-concave
and 2-affine, we have via Remark \ref{rem_2aff}:

\begin{remark}
For a continuous and $c-$convex function $f$ the set $\partial_{c}f(\left(
1-\lambda\right)  x_{1}+\lambda x_{2})$ has exactly one element for all
$\lambda\in\left(  0,1\right)  ,$ $x_{1}\neq x_{2}\in I\ $such that
$\partial_{c}f(x_{1})\cap\partial_{c}f(x_{2})\neq\varnothing.$ Particularly,
this means when $c\left(  x,y\right)  =xy$ that if there exist two points
$x<y\in I$ such that $f_{+}^{\prime}\left(  x\right)  =f_{-}^{\prime}\left(
y\right)  ,$ then the function is affine on $\left[  x,y\right]  .$
\end{remark}

\begin{corollary}
Let the cost function $c:I\times J\rightarrow%
\mathbb{R}
$ be 1-concave. Assume $f$ is convex on $I$. If there exist $x_{1}<x_{2}$ such
that $\partial_{c}f(x_{1})\cap\partial_{c}f(x_{2})\neq\varnothing,$ then
$\left[  x_{1},x_{2}\right]  \subset dom\left(  \partial_{c}f\right)  $.
\end{corollary}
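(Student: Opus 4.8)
The plan is to reduce everything to the inclusion (\ref{s_diff_1_in}) of Proposition \ref{ref_an}. Fix an arbitrary point $x_{0}\in\left[  x_{1},x_{2}\right]  $. Since $x_{1}<x_{2}$, we may write $x_{0}=\left(  1-\lambda\right)  x_{1}+\lambda x_{2}$ for the uniquely determined $\lambda=\left(  x_{0}-x_{1}\right)  /\left(  x_{2}-x_{1}\right)  \in\left[  0,1\right]  $. It suffices to show $\partial_{c}f(x_{0})\neq\varnothing$; since $x_{0}$ is arbitrary, this yields $\left[  x_{1},x_{2}\right]  \subset dom\left(  \partial_{c}f\right)  $.

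By hypothesis there exists $y\in\partial_{c}f(x_{1})\cap\partial_{c}f(x_{2})$; in particular $x_{1},x_{2}\in dom\left(  \partial_{c}f\right)  $, so Proposition \ref{ref_an} applies directly and gives $y\in\partial_{c}f(x_{0})$. Alternatively one may just replay the short argument of that proposition, which uses only the convexity of $f$ and the $1$-concavity of $c$: from
\[
f(x)\geq f(x_{i})+c\left(  x,y\right)  -c\left(  x_{i},y\right)  ,\qquad i=1,2,\ x\in I,
\]
take the convex combination with weights $1-\lambda$ and $\lambda$, bound $\left(  1-\lambda\right)  f(x_{1})+\lambda f(x_{2})$ from below by $f(x_{0})$ (convexity of $f$) and $\left(  1-\lambda\right)  c(x_{1},y)+\lambda c(x_{2},y)$ from above by $c(x_{0},y)$ ($1$-concavity of $c$), to obtain
\[
f(x)\geq f(x_{0})+c\left(  x,y\right)  -c\left(  x_{0},y\right)  \quad\text{for all }x\in I,
\]
that is, $y\in\partial_{c}f(x_{0})$. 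Hence $\partial_{c}f(x_{0})\neq\varnothing$ and the corollary follows.

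There is essentially no obstacle here: the statement is a direct corollary once one notes that the sole role of the endpoints $x_{1},x_{2}$ is to furnish a common $c$-subgradient $y$, and that $y$ persists along the entire segment. The only point deserving a word of care is that Proposition \ref{ref_an} is phrased for functions that are both convex and $c$-convex, whereas here $f$ is only assumed convex; but the derivation of (\ref{s_diff_1_in}) never invokes $c$-convexity, so the reduction is legitimate. The boundary values $\lambda=0$ and $\lambda=1$ simply recover the given memberships $y\in\partial_{c}f(x_{1})$ and $y\in\partial_{c}f(x_{2})$, so the endpoints are covered as well.
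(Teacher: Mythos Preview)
Your proof is correct and follows essentially the same approach as the paper: both note that the inclusion (\ref{s_diff_1_in}) from Proposition \ref{ref_an} remains valid under the present hypotheses (convexity of $f$ and $1$-concavity of $c$ alone, without $c$-convexity), and then combine it with the nonemptiness assumption to conclude $\partial_{c}f((1-\lambda)x_{1}+\lambda x_{2})\neq\varnothing$ for all $\lambda\in[0,1]$. Your explicit remark that the derivation of (\ref{s_diff_1_in}) never invokes $c$-convexity is exactly the point behind the paper's phrase ``still holds.''
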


\begin{proof}
The inclusion (\ref{s_diff_1_in}) still holds and combined with our assumption
yields
\[
\partial_{c}f(\left(  1-\lambda\right)  x_{1}+\lambda x_{2})\neq\varnothing
\]
$\ $for all $\lambda\in\left[  0,1\right]  .$
\end{proof}

\subsection{Local and global $c-$convexity}

Let $I$ be a bounded open interval and $f:I\rightarrow%
\mathbb{R}
.$ We introduce the local $c-$subdifferential by%
\[
\partial_{c}^{l}f(x_{0})=\left\{  y\in J:\exists\varepsilon>0\ \text{such that
}f(x)\geq f(x_{0})+c\left(  x,y\right)  -c\left(  x_{0},y\right)  \text{ for
}x\in U_{\varepsilon}\right\}  .
\]
Here the set $U_{\varepsilon}$ $=\left\{  x:\left\vert x-x_{0}\right\vert
<\varepsilon\right\}  .$ The function $h_{x_{0}}^{l}:U_{\varepsilon
}\rightarrow%
\mathbb{R}
,$
\[
h_{x_{0}}^{l}\left(  x\right)  =f(x_{0})+c\left(  x,y\right)  -c\left(
x_{0},y\right)
\]
is called local $c-$support curve. Note that $\partial f(x_{0})\subseteq
\partial_{c}^{l}f(x_{0}).$

We call a proper function $f:I\rightarrow\left(  -\infty,\infty\right]  $
locally $c$-convex at $x_{0}$ if there exists $\varepsilon>0$\ and
$g:J\rightarrow\left(  -\infty,\infty\right]  $ such that%
\begin{equation}
f\left(  x\right)  =\sup_{y\in\partial_{c}^{l}f(x)}\left\{  c\left(
x,y\right)  -g\left(  y\right)  \right\}  \label{lcc}%
\end{equation}
for $x\in U_{\varepsilon}.$ Then one has
\[
f_{l}^{c}\left(  y\right)  =\sup_{x\in U_{\varepsilon}}\left\{  c\left(
x,y\right)  -f\left(  x\right)  \right\}  \
\]
for all $y\in\partial_{c}^{l}f(x)\ $and%
\[
f_{l}^{cc}\left(  x\right)  =\sup_{y\in\partial_{c}^{l}f(x)}\inf_{z\in
U_{\varepsilon}}\left\{  f\left(  z\right)  +c\left(  x,y\right)  -c\left(
z,y\right)  \right\}  .
\]
Obviously the condition $f=f_{l}^{cc}$ on $U_{\varepsilon}$ is equivalent to
the local $c$-convexity of\ $f$ at $x_{0}.$

\begin{proposition}
Let $f:I\rightarrow%
\mathbb{R}
,$ $\alpha\in I.$ The function $f$ admits a local $c-$support curve at
$\alpha$\ if and only if $f\left(  \alpha\right)  =f_{l}^{cc}\left(
\alpha\right)  .$
\end{proposition}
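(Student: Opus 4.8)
The plan is to prove the two implications separately, unwinding the definitions of the local $c$-support curve, the local $c$-subdifferential, and the double local $c$-conjugate $f_{l}^{cc}$.

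First I would treat the direction where $f$ admits a local $c$-support curve at $\alpha$. By definition there is a $y\in\partial_{c}^{l}f(\alpha)$ and an $\varepsilon>0$ with $f(x)\ge f(\alpha)+c(x,y)-c(\alpha,y)$ for all $x\in U_{\varepsilon}$. Rewriting this as $f(z)+c(\alpha,y)-c(z,y)\ge f(\alpha)$ for every $z\in U_{\varepsilon}$ shows that $\inf_{z\in U_{\varepsilon}}\{f(z)+c(\alpha,y)-c(z,y)\}\ge f(\alpha)$; taking the supremum over $\partial_{c}^{l}f(\alpha)$ gives $f_{l}^{cc}(\alpha)\ge f(\alpha)$. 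For the reverse inequality I would note that, as in the global case $f^{cc}\le f$, for every $y$ one has $\inf_{z\in U_{\varepsilon}}\{f(z)+c(\alpha,y)-c(z,y)\}\le f(\alpha)$ by choosing $z=\alpha$, so $f_{l}^{cc}(\alpha)\le f(\alpha)$, and equality follows.

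Conversely, suppose $f(\alpha)=f_{l}^{cc}(\alpha)$. By the formula for $f_{l}^{cc}$, the supremum over $y\in\partial_{c}^{l}f(\alpha)$ of $\inf_{z\in U_{\varepsilon}}\{f(z)+c(\alpha,y)-c(z,y)\}$ equals $f(\alpha)$. In particular $\partial_{c}^{l}f(\alpha)$ must be nonempty; I would then argue that for such a $y$ the inner infimum equals $f(\alpha)$ (it cannot exceed $f(\alpha)$ by taking $z=\alpha$, and if it were strictly less for every admissible $y$ the supremum would be $<f(\alpha)$). From $\inf_{z\in U_{\varepsilon}}\{f(z)+c(\alpha,y)-c(z,y)\}=f(\alpha)$ we read off $f(z)\ge f(\alpha)+c(z,y)-c(\alpha,y)$ for all $z\in U_{\varepsilon}$, which is exactly the statement that $h_{\alpha}^{l}(x)=f(\alpha)+c(x,y)-c(\alpha,y)$ is a local $c$-support curve at $\alpha$.

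The point that requires a little care — and which I expect to be the main obstacle — is the logical juggling in the converse direction: $f_{l}^{cc}(\alpha)$ is written as a supremum over $\partial_{c}^{l}f(\alpha)$, so one must first establish that this index set is nonempty (otherwise the supremum over the empty set is $-\infty$ and the hypothesis $f(\alpha)=f_{l}^{cc}(\alpha)$ forces a contradiction for finite-valued $f$, already handing us a witness $y$), and then promote "the supremum of these infima equals $f(\alpha)$" together with "each infimum is $\le f(\alpha)$" to "some (indeed, the witnessing) infimum equals $f(\alpha)$." If one wants the attained supremum one may invoke continuity/compactness as elsewhere in the paper; otherwise the argument should be phrased so that any $y$ attaining the supremum — or, more robustly, any $y\in\partial_{c}^{l}f(\alpha)$, since every such $y$ already yields $\inf_z\{\cdots\}\le f(\alpha)$ with equality forced — serves as the $c$-gradient of the desired local support curve. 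Everything else is a direct transcription of definitions, entirely parallel to the global equivalence $f=f^{cc}\iff f$ is $c$-convex recalled in the introduction.
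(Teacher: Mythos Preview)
Your forward direction is correct and matches the paper's argument exactly.

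For the converse, your main argument contains a genuine gap: from ``each inner infimum is $\le f(\alpha)$'' and ``their supremum equals $f(\alpha)$'' you cannot conclude that some infimum \emph{equals} $f(\alpha)$; a supremum of strictly smaller values may well equal the bound. You flag this yourself, and your ``robust'' fix is in fact the right observation --- but it short-circuits the whole argument. With the paper's definition $f_{l}^{cc}(\alpha)=\sup_{y\in\partial_{c}^{l}f(\alpha)}\inf_{z}\{\cdots\}$, finiteness of $f_{l}^{cc}(\alpha)=f(\alpha)$ forces $\partial_{c}^{l}f(\alpha)\neq\varnothing$, and nonemptiness of $\partial_{c}^{l}f(\alpha)$ is \emph{by definition} the statement that $f$ admits a local $c$-support curve at $\alpha$. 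No analysis of the inner infimum is needed at all; your remark ``with equality forced'' is true but already presupposes the conclusion.

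The paper argues the converse differently and does not exploit this definitional shortcut: it observes that $f_{l}^{cc}$ is itself $c$-convex on $U_{\varepsilon}$, hence admits a $c$-support curve at $\alpha$, and then uses $f\ge f_{l}^{cc}$ together with $f(\alpha)=f_{l}^{cc}(\alpha)$ to transfer that support curve to $f$. This route would survive if $f_{l}^{cc}$ were defined with the supremum taken over all $y\in J$ (as in the global $f^{cc}$), whereas your shortcut depends essentially on the paper's choice to index the supremum by $\partial_{c}^{l}f(\alpha)$.
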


\begin{proof}
We assume that $f$ admits a local $c-$support curve at $\alpha.$ Let
$y\in\partial_{c}^{l}f(\alpha).$ Then there exists $\varepsilon>0$ such that%
\[
f(z)\geq f(\alpha)+c\left(  z,y\right)  -c\left(  \alpha,y\right)  \ \text{for
all }z\in U_{\varepsilon}.
\]
Thus, since%
\[
\inf_{z\in U_{\varepsilon}}\left\{  f(z)+c\left(  \alpha,y\right)  -c\left(
z,y\right)  \right\}  =f\left(  \alpha\right)  ,
\]
we have
\[
f_{l}^{cc}\left(  \alpha\right)  =\sup_{y\in\partial_{c}^{l}f(\alpha)}%
\inf_{z\in U_{\varepsilon}}h\left(  f(z)+c\left(  \alpha,y\right)  -c\left(
z,y\right)  \right)  =f\left(  \alpha\right)  .
\]

Conversely, let $\varepsilon>0.$ The function $f_{l}^{cc}$ is $c$-convex on
$U_{\varepsilon},$ hence it admits a $c-$support curve at $\alpha,$ that is
there exists $y\in\partial_{c}^{l}f(\alpha)$ such that
\[
f_{l}^{cc}(z)\geq f_{l}^{cc}(\alpha)+c\left(  z,y\right)  -c\left(
\alpha,y\right)  \ \text{for all }z\in U_{\varepsilon}.
\]
Also we know that $f_{l}^{cc}\leq f$ on $U_{\varepsilon},$ which yields
\begin{align*}
f\left(  z\right)   &  \geq f_{l}^{cc}(z)\geq f_{l}^{cc}(\alpha)+c\left(
z,y\right)  -c\left(  \alpha,y\right) \\
&  =f(\alpha)+c\left(  z,y\right)  -c\left(  \alpha,y\right)  \ \text{for all
}z\in U_{\varepsilon}.
\end{align*}
Summarizing the above discussion, there exists $y\in\partial_{c}^{l}f(\alpha)$
such that
\[
f\left(  z\right)  \geq f(\alpha)+c\left(  z,y\right)  -c\left(
\alpha,y\right)  \ \text{for all }z\in U_{\varepsilon}%
\]
and the claim follows.
\end{proof}

\begin{remark}
This agrees with the known fact that the function $f$ admits a supporting line
at $\alpha$ if and only if $f\left(  \alpha\right)  =f^{\ast\ast}\left(
\alpha\right)  $ (see \cite{TB2006}).
\end{remark}

\begin{acknowledgement}
We are very grateful to Dr.\ Eleutherius\ Symeonidis (from
Mathematisch-Geographische Fakult\"{a}t, Katholische Universit\"{a}t
Eichst\"{a}tt-Ingolstadt, Germany) for useful discussions on this paper.
\end{acknowledgement}

\end{document}